\newtheorem{theorem}{Theorem}[section]
\newtheorem{lemma}[theorem]{Lemma}
\newtheorem{definition}[theorem]{Definition}
\newtheorem{corollary}[theorem]{Corollary}
\theoremstyle{remark}
\newtheorem{remark}[theorem]{Remark}
\title{\textbf{On the Uniqueness of Best Non-decreasing Approximation in Orlicz Spaces}}
\author{A. Benavente $^1$, J. Costa Ponce $^2$, S. Favier $^3$}
\date{\today}
\begin{document}

\maketitle

\begin{abstract}
Given an approximately continuous function $f$ in an Orlicz space  $L^\Phi([a,b]),$ for a suitable class of convex functions $\Phi,$ we employ a characterization of the best non-decreasing approximation set to establish its continuity, which in turn yields the  uniqueness property for the best non-decreasing approximation in $L^\Phi([a,b]).$

\end{abstract}

\footnotetext[1]{2020 \textit{Mathematics Subject Classification.} Primary 46E30, 41A52; Secondary 41A29, 41A65.}
\footnotetext[2]{\textit{Key words and phrases.} Orlicz Spaces, Best Approximation, Uniqueness of Best Approximation, Lattices, Monotone Functions, Convex Functions.}
\footnotetext[3]{\textit{Research partially supported by CONICET and Universidad Nacional de San Luis.}}

\vspace{1cm}

\section{Introduction}

The study of properties such as existence and uniqueness,  for a best
approximation to a given function from a specified class is a
central topic in Approximation Theory. When the approximation class
has a lattice structure, such as the set of non-decreasing
functions, the problem poses particular challenges. In their seminal
work, Landers and Rogge \cite{landersrogge} established results on
the existence of best approximants in $L^\Phi$-spaces from function
lattices. More examples of best approximation, where the uniqueness
property is still an open problem, are treated in \cite{Levis}.

A case of special interest is the approximation of a function $f \in
L^1([0,1]^n)$ by a non-decreasing function. Darst and Huotari
\cite{darsthuotari},  followed by Z\'o and Iturrieta
\cite{zoiturrieta} in Orlicz spaces, among others, dealt  with
this problem, proving uniqueness under the  assumption that $f$ was
a continuous function and then $f \in L^\infty([0,1]^n)$. Also this
topic was considered by Darst and Fu \cite{darstfu} for an
approximately continuous function $f\in L^1 ((0,1)^n). $ A
significant advance was made by Smith and Swetits
\cite{smithswetits}, who managed to prove uniqueness while removing the
boundedness hypothesis over $f$.

In order to obtain uniqueness of best non decreasing approximation  for an approximately continuous function $f$ that is not necessarily bounded and for a suitable convex function $\Phi$, this paper applies the method of Smith and Swetits in the broader framework of Orlicz spaces $L^{\Phi}([a, b]).$  This is done without using the Radon-Nikodym set up  for the characterization of the best approximations introduced by \cite{Brunk} and considered in \cite{Cuenya} or the requirement that the greatest and least best non-decreasing approximations be used, as in \cite{Marano}.
\section{Definitions and notation}

Throughout this paper we let    $([a,b], \mathcal{B}, \mu)$  denote  the finite measure space, where $\mathcal{B}$ is the Borel $\sigma$-algebra over $[a,b]$ and $\mu$ is the standard Lebesgue measure. We recall that $\mathcal{M}=\mathcal{M}([a,b],\mathcal{B}, \mu)$ is the set of all $\mu$-equivalent classes of $\mathcal{B}$-measurable functions on $[a,b]$.

We begin by defining the specific class of convex functions used to construct the Orlicz spaces considered in  this work.   A function $\varphi:[0,\infty)\to[0,\infty)$ belongs to the class $\mathcal{F}$ if it is non-decreasing, continuous, positive, satisfies $\varphi(0^{+})=0$ and $\lim_{t\to\infty}\varphi(t)=\infty$.

Analogously, we say a function $\varphi:[0,\infty)\to[0,\infty)$ belongs to the class $\mathcal{F_{\infty}}$ if $\varphi$ is bounded, non-decreasing, continuous, positive for $t>0$, and satisfies $\varphi(0^{+})=0$.

An $N$-function $\Phi$ is a convex function represented by $\Phi (x) = \int_{0}^{x}\varphi (t)dt$ for some $\varphi\in\mathcal{F}$ and similarly,
an $N_\infty$-function $\Phi$ is a convex function represented by $\Phi (x) = \int_{0}^{x}\varphi (t)dt$ for some $\varphi\in\mathcal{F}_{\infty}$.
Therefore, $N$-functions and $N_\infty$-functions are differentiable on $[0,\infty)$.

In addition, we assume that $N$-functions and $N_\infty$-functions satisfy the $\Delta_2$ condition: there is a constant $K_2 > 0$ such that $\Phi(2x) \le K_2\Phi(x)$ for all $x \ge 0$. A comprehensive treatment of this condition can be found in \cite{krasnoselskii}.

The Orlicz space $L^{\Phi}([a,b])=L^{\Phi}$ is the set of all equivalence classes of measurable functions $f$ for which $\int_a^b \Phi(|f(x)|)d\mu < \infty$. If $\varphi\in\mathcal{F}$ or $\varphi\in\mathcal{F_{\infty}}$, the space $L^{\varphi}([a,b])=L^{\varphi}$ is defined in a similar way and in these cases, $L^{\Phi}\subset L^{\varphi}$.
 
Given a subset $C \subset L^\Phi$, an element $g^* \in C$ is a best $\Phi$-approximation to $f \in L^\Phi$ from $C$ if
$$ \int_{a}^{b}\Phi(|f-g^*|)d\mu = \inf_{g \in C}\int_{a}^{b}\Phi(|f-g|)d\mu. $$
We denote the set of such best approximants by $\mu_\Phi(f/C),$ and the subset $C$ will be called the approximation class in this case. We naturally assume that $f\notin C$, for the other case is trivial.

A set $C \subset \mathcal{M}$ is a lattice if for any $f,g \in C$, it holds that $f \wedge g := \min(f,g) \in C$ and $f \vee g := \max(f,g) \in C$.
 Furthermore, a lattice $C \subset \mathcal{M}$ is $\Phi$-closed if for any sequence $\{f_n\} \subset C$ such that $f_n \uparrow f$ or $f_n \downarrow f$ for some $f \in L^\Phi$, it holds that $f \in C$. We use $f_n \uparrow f$ (or $f_n \downarrow f$) to denote that the sequence converges non-decreasingly (or non-increasingly) to $f$.

An interesting way of generating $\Phi$-closed lattices in $L^\Phi([a,b])$ is by considering a $\sigma$-lattice $\mathcal{L}\subset\mathcal{B}$, which means that $\mathcal{L}$ is closed for countable unions, countable intersections, and $\emptyset,[a,b]\in \mathcal{L}$. A function $f$ is said to be $\mathcal{L}$-measurable if there exists an $\mathcal{B}$-measurable function $g$ such that $f = g$ almost everywhere on $[a, b]$, and $\{g > c\} \in \mathcal{L}$ for all $c \in \mathbb{R}$. The set
$ L^\Phi(\mathcal{L}) := \{f \in L^\Phi([a, b]) : f \text{ is } \mathcal{L}\text{-measurable} \} $
turns out to be a $\Phi$-closed lattice.

In this paper, we specifically focus on the case where $ C = M_{\Phi},$ which is the $\Phi$-closed lattice of all non-decreasing functions in $ L^{\Phi}([a, b]) $. We note that $ \mu_{\Phi} (f / M_{\Phi}) \neq \emptyset, $ according to \cite{landersrogge}, for any $ f \in L^{\Phi}([a, b])$ with $\Phi$ a convex function satisfying $\Phi(0)=0$, $\lim_{t\to\infty}\Phi(t)=\infty$ and the $\Delta_2$ condition. Additionally, $M_{\Phi}$ is exactly the $\Phi$-closed lattice in $L^{\Phi}([a, b])$ generated by the $\sigma$-lattice $\mathcal{D}\subset \mathcal{B}$, where $\mathcal{D} = \big\{ (c, b], \, [c, b] : c \in [a, b] \big\} \cup \{\emptyset\}$.

\section{Characterization and Fundamental Properties}

Our analysis relies upon a characterization theorem for best approximants from a function lattice, an already existing result for Orlicz spaces (see \cite{favierzo}), which we reworked for this paper.

\begin{theorem}[Characterization of Best Approximation from a Lattice]
\label{thm:lattice_characterization}
Let $\Phi$ be a $N$ or a $N_\infty$ function that satifies the $\Delta_2$ condition. Let $\mathcal{L}$ be a
$\sigma$-lattice, $\mathcal{L} \subseteq \mathcal{B}$. Given $f \in L^{\Phi}$, an element $g^* \in L^{\Phi}(\mathcal{L})$ is a best approximant to $f$ from $L^{\Phi}(\mathcal{L})$ if and only if
\begin{equation}
\int_a^b \varphi(|f-g^{*}|)sgn(f-g^{*})(g^{*}-g)d\mu \ge 0 \tag{I}\label{carac}
\end{equation}
for all $g \in L^{\Phi}(\mathcal{L})$.
\end{theorem}

\begin{proof}
Let $g^* \in \mu_\Phi(f/L^\Phi(\mathcal{L}))$ and consider any other $g \in L^\Phi(\mathcal{L})$ such that $g \neq g^*$. We define the auxiliary function $F_g(\epsilon)$ for $\epsilon \ge 0$:
$$
F_{g}(\epsilon):=\int_{a}^{b} \Phi(|f-(\epsilon g+(1-\epsilon)g^{*})|) \,d\mu.
$$
First, we observe that $F_g$ is a convex function on $[0, \infty)$. Indeed, for any $a_1, b_1, \epsilon_1, \epsilon_2 \ge 0$ with $a_1+b_1=1$, the convexity of $\Phi$ and the properties of the absolute value and the integral yield:
\begin{align*}
F_{g}(a_1 \epsilon_{1}+b_1 \epsilon_{2}) &= \int_a^b \Phi(|f-((a_1 \epsilon_{1}+b_1 \epsilon_{2})g+(1-(a_1 \epsilon_{1}+b_1 \epsilon_{2}))g^{*})|) \,d\mu \\
& = \int_a^b \Phi(|a_1(f-(\epsilon_1 g + (1-\epsilon_1)g^*)) + b_1(f-(\epsilon_2 g + (1-\epsilon_2)g^*))|) \,d\mu \\
& \le \int_a^b \Phi(a_1|f-(\epsilon_1 g + (1-\epsilon_1)g^*)| + b_1|f-(\epsilon_2 g + (1-\epsilon_2)g^*)|) \,d\mu \\
& \le a_1 \int_a^b \Phi(|f-(\epsilon_1 g+(1-\epsilon_1)g^{*})|) \,d\mu+b_1 \int_a^b \Phi(|f-(\epsilon_2 g+(1-\epsilon_2)g^{*})|) \,d\mu \\
& = a_1 F_{g}(\epsilon_{1})+b_1 F_{g}(\epsilon_{2}).
\end{align*}
Since $g^*$ is a best approximant, $F_g$ must attain its global minimum at $\epsilon=0$. Due to the convexity of $F_g$, this is equivalent to $F_{g}'(0^+) \ge 0$. The derivative is given by:
$$
F_{g}'(0^+) = \lim_{\epsilon \to 0^{+}} \frac{F_{g}(\epsilon)-F_{g}(0)}{\epsilon} = \lim_{\epsilon \to 0^{+}} \frac{1}{\epsilon}\left\{\int_a^b \Phi(|f-(\epsilon g+(1-\epsilon)g^{*})|) \,d\mu-\int_a^b \Phi(|f-g^{*}|) \,d\mu\right\}.
$$
To justify passing the limit inside the integral, we must find an integrable function that dominates the difference quotient. By the Mean Value Theorem and the convexity of $\Phi$, for $u, v \ge 0$, we have $|\Phi(u) - \Phi(v)| \le |u-v|\varphi(\max(u,v))$. Let $u = |f-g^*|$ and $v = |f-(\epsilon g + (1-\epsilon)g^*)|$. Then $|u-v| \le \epsilon|g-g^*|$. Thus,
$$
\frac{|\Phi(v) - \Phi(u)|}{\epsilon} \le \frac{\epsilon|g-g^*|}{\epsilon} \varphi(\max(u,v)) \le |g-g^*|\varphi(|f-g^*| + |g-g^*|).
$$ 

For any $w \geqslant 0$, we have:
$$ \Phi(2w) = \int_{0}^{2w}\varphi(t)dt \geqslant \int_{w}^{2w}\varphi(t)dt \geqslant \int_{w}^{2w}\varphi(w)dt = w\varphi(w). $$

Taking $w=|f-g^*| + |g-g^*|$, we obtain:
$$ (|f-g^*| + |g-g^*|)\varphi(|f-g^*| + |g-g^*|) \leqslant \Phi(2(|f-g^*| + |g-g^*|)). $$

Thus,
$$ |g-g^*|\varphi(|f-g^*| + |g-g^*|) \leqslant (|f-g^*| + |g-g^*|)\varphi(|f-g^*| + |g-g^*|) \leqslant \Phi(2(|f-g^*| + |g-g^*|)). $$

Since $\Phi$ satisfies the $\Delta_2$ condition,
$$ |g-g^*|\varphi(|f-g^*| + |g-g^*|) \leqslant K_2 \Phi(|f-g^*| + |g-g^*|). $$

We note that $L^\Phi$ is a vector space and $f,g,g^* \in L^\Phi$, so we have $|f-g^*| + |g-g^*|\in L^\Phi$.
Therefore the function $|g-g^*|\varphi(|f-g^*| + |g-g^*|)$ is an integrable function over $[a,b]$, so we apply Lebesgue's Dominated Convergence Theorem to obtain

\begin{align*}
F_{g}'(0^+) &= \int_a^b \left. \frac{d}{d\epsilon} \Phi(|f-(\epsilon g+(1-\epsilon)g^{*})|) \right|_{\epsilon=0} d\mu \\
&= \int_a^b \varphi(|f-g^{*}|) sgn(f-g^{*}) (g^*-g) d\mu.
\end{align*}
Imposing the condition $F_{g}'(0^+) \ge 0$ yields the integral inequality $\eqref{carac}$ stated in the theorem. Conversely, if $\eqref{carac}$ holds for all $g \in L^\Phi(\mathcal{L})$, then $F_{g}'(0^+) \ge 0$. By the convexity of $F_g$, this guarantees that $\epsilon=0$ is a global minimum, confirming that $g^*$ is a best approximant. This completes the proof.
\end{proof}

From this characterization, we define two auxiliary functions, analogous to those introduced by Smith and Swetits.

\begin{definition}
Let $g^* \in \mu_\Phi(f/M_\Phi)$. We define
 $$\Phi_{g^{*}}:=\varphi(|f-g^{*}|)sgn(f-g^{*})$$
 and
 $$r_{g^{*}}(c):=\int_{a}^{c}\Phi_{g^{*}}d\mu, \quad a \le c \le b$$
\end{definition}

Later on, the following auxiliary lemmas will be of great importance.

\begin{lemma}\label{lem:min_at_endpoints}
	Let $f:[a,b]\rightarrow\mathbb{R}$ be a continuous function such that $f(a)=f(b)=0$ and $f(x)>0$ for $x \in (a,b)$. Then there exists a sequence $\{\epsilon_n\}_{n=1}^\infty$ tending to $0$ such that for each $n$, $f$ is non-constant and takes its minimum over $[a+\epsilon_n, b-\epsilon_n]$ at $a+\epsilon_n$ or at $b-\epsilon_n$.
\end{lemma}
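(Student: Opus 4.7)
The plan is to start from an arbitrary null sequence and then \emph{relocate} each $\epsilon$ so that the minimum falls at an endpoint. Fix any sequence $\epsilon_k \to 0^+$ with $\epsilon_k < (b-a)/2$. For each $k$, pick $x_k \in [a+\epsilon_k, b-\epsilon_k]$ realizing $t_k := \min_{[a+\epsilon_k,\,b-\epsilon_k]} f$. Since $t_k \le f(a+\epsilon_k) \to 0$, we have $f(x_k) = t_k \to 0$. Now define
$$
\epsilon_k' := \min(x_k - a,\; b - x_k),
$$
so that $\epsilon_k' \ge \epsilon_k$ and $x_k$ becomes one of the endpoints of the (smaller) symmetric interval $[a+\epsilon_k',\,b-\epsilon_k']$.

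The key observation is the inclusion $[a+\epsilon_k',\,b-\epsilon_k'] \subset [a+\epsilon_k,\,b-\epsilon_k]$, which is immediate from the definition of $\epsilon_k'$. Therefore
$$
\min_{[a+\epsilon_k',\,b-\epsilon_k']} f \;\ge\; \min_{[a+\epsilon_k,\,b-\epsilon_k]} f \;=\; t_k,
$$
while on the other hand $x_k$ belongs to the smaller interval with $f(x_k) = t_k$. Hence the minimum on $[a+\epsilon_k',\,b-\epsilon_k']$ equals $t_k$ and is attained at the endpoint $x_k$, which by construction is either $a+\epsilon_k'$ or $b-\epsilon_k'$.

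Next I would check that $\epsilon_k' \to 0$. If this failed, some subsequence would satisfy $\epsilon_k' \ge \delta > 0$, forcing $x_k \in [a+\delta,\,b-\delta]$. But $f$ is continuous and strictly positive on that compact set, hence bounded below by some $\eta > 0$, contradicting $f(x_k)=t_k \to 0$. Finally, for non-constancy: pick $c \in (a,b)$ with $f(c) = M := \max_{[a,b]} f > 0$; for every sufficiently small $\epsilon$ one has $c \in [a+\epsilon, b-\epsilon]$ and $f(a+\epsilon) < M$, so $f$ is non-constant on $[a+\epsilon, b-\epsilon]$. Since $\epsilon_k' \to 0$, this applies for all large $k$, and discarding the initial terms yields the required sequence.

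The only genuinely clever step is the passage from $\epsilon_k$ to $\epsilon_k'$: one must resist the temptation to work with the original $\epsilon_k$ and instead shift to the distance between the interior minimizer $x_k$ and the nearest true endpoint of $[a,b]$. Once this shift is in place, containment of intervals trivially forces the minimum to sit at $x_k$, and the remaining two verifications ($\epsilon_k' \to 0$ and non-constancy) are routine compactness arguments.
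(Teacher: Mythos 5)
Your proof is correct, and its key idea coincides with the paper's: given a symmetric subinterval, take a minimizer $x_k$ of $f$ on it and pass to the smaller symmetric subinterval $[a+\epsilon_k',\,b-\epsilon_k']$ with $\epsilon_k'=\min(x_k-a,\,b-x_k)$, so that $x_k$ becomes an endpoint and, by the inclusion of intervals, still realizes the minimum there. Where you differ is in the bookkeeping: the paper builds the sequence inductively, choosing each new interval so that the value of $f$ at its minimizer drops below the values at the previous endpoints, and reads off from that construction that the $\epsilon_k$ decrease to $0$; you instead start from an arbitrary null sequence and prove $\epsilon_k'\to 0$ a posteriori by compactness ($f$ is bounded below by a positive constant on $[a+\delta,\,b-\delta]$, while $f(x_k)=t_k\le f(a+\epsilon_k)\to 0$). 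Your version is non-inductive and makes the convergence to zero more transparent than the paper's inductive bound; it also settles non-constancy cleanly via an interior maximum point and correctly discards the finitely many initial indices where the relocated interval could be degenerate (when $x_k$ is the midpoint) or where non-constancy might fail.
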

\begin{proof}
	Let $\{\alpha_n\}_{n=1}^\infty$ be a decreasing positive sequence tending to 0. We construct the sequence $\{\epsilon_n\}_{n=1}^\infty$ inductively.
	
	For $n=1$, since $\alpha_n \to 0$ and $f$ is continuous with $f(a)=0$, there exists an index $k_1$ such that $f(a+\alpha_{k_1}) < f\left(\frac{a+b}{2}\right)$. Define 
	$ \Lambda_1 := \min\left\{x \in [a+\alpha_{k_1}, b-\alpha_{k_1}] : f(x) = \min_{y \in [a+\alpha_{k_1}, b-\alpha_{k_1}]} f(y)\right\}. $
	If $\Lambda_1 > \frac{a+b}{2}$, set $b-\epsilon_1 := \Lambda_1$. If $\Lambda_1 \le \frac{a+b}{2}$, set $a+\epsilon_1 := \Lambda_1$. On the resulting interval $[a+\epsilon_1, b-\epsilon_1]$, $f$ is non-constant and attains its minimum at an endpoint.
	
	Inductively, suppose at step $n-1$ we have an interval $[a+\epsilon_{n-1}, b-\epsilon_{n-1}]$ as described in the lemma. Since $\alpha_n \to 0$, there exists an index $k_n$ such that $f(a+\alpha_{k_n}) < \min\{f(a+\epsilon_{n-1}), f(b-\epsilon_{n-1})\}$. Define 
	$ \Lambda_n := \min\left\{x \in [a+\alpha_{k_n}, b-\alpha_{k_n}] : f(x) = \min_{y \in [a+\alpha_{k_n}, b-\alpha_{k_n}]} f(y)\right\}. $
	We set $a+\epsilon_n$ or $b-\epsilon_n$ to be $\Lambda_n$ depending on whether $\Lambda_n \le \frac{a+b}{2}$ or $\Lambda_n > \frac{a+b}{2}$. By construction, $\epsilon_n < \epsilon_{n-1}$, and since $\alpha_{k_n} \le \epsilon_n < \epsilon_{n-1} \le \alpha_{k_{n-1}}$, the sequence $\{\epsilon_n\}_{n=1}^\infty$ is strictly decreasing and converges to 0.
\end{proof}

\begin{lemma}\label{lem:swetits_analogue}
The following properties hold for any $g^* \in \mu_\Phi(f/M_\Phi).$
\begin{enumerate}
    \item[\bfseries 1.] $\int_{a}^{b}\Phi_{g^{*}}g^{*}d\mu=0$.
    \item[\bfseries 2.] $r_{g^{*}}(c)\ge 0$ for all $c\in[a,b]$.
    \item[\bfseries 3.] $\int_{a}^{b}\Phi_{g^{*}}d\mu=0$.
    \item[\bfseries 4.] $\int_{c}^{b}\Phi_{g^{*}}d\mu\le 0$ for all $c\in[a,b]$.
    \item[\bfseries 5.] If $g^{*}$ has a jump discontinuity at $c \in (a,b)$, then $r_{g^{*}}(c)=0$.
    \item[\bfseries 6.] If $r_{g^{*}}(c)>0$ for some $c\in(a,b)$, then $g^{*}$ is constant in a neighborhood of $c$.
\end{enumerate}
\end{lemma}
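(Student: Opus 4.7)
The plan is to extract each of the six properties from Theorem~\ref{thm:lattice_characterization}, which reads $\int_a^b \Phi_{g^*}(g^*-g)\,d\mu \ge 0$ for every $g \in M_\Phi$, by choosing a test function $g$ that isolates the desired information. Parts 1--4 use rigid global perturbations. Taking $g = 0$ and $g = 2g^*$ (both non-decreasing and in $L^\Phi$ by $\Delta_2$) produces the two inequalities that combine into Part~1; taking $g = g^* + t$ with $t \in \mathbb{R}$ of either sign yields Part~3; and taking $g = g^* - t\chi_{[a,c]}$ with $t > 0$, which remains non-decreasing because it merely shifts $g^*$ downward on a prefix by an amount that stays below the threshold $g^*(c)$, gives Part~2. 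Part~4 is then the algebraic identity $\int_c^b \Phi_{g^*}\,d\mu = \int_a^b \Phi_{g^*}\,d\mu - r_{g^*}(c) = -r_{g^*}(c)$, combining Parts~2 and~3.

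For Part~5 the jump $g^*(c^-) < g^*(c)$ supplies slack: the competitor $g = g^* - t\chi_{[a,c)}$ stays non-decreasing for every $t$ in the interval $\bigl[g^*(c^-) - g^*(c),\infty\bigr)$, which now contains $0$ in its interior. The characterization then reads $t\,r_{g^*}(c) \ge 0$ for $t$ of either sign, so $r_{g^*}(c) = 0$.

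Part~6 is the delicate one, and I plan to argue by contrapositive. Part~5 lets me assume $g^*$ is continuous at $c$. Suppose $g^*$ is not constant on $[c, c+\delta]$ for any $\delta > 0$; monotonicity then forces $g^*(c+\delta) > g^*(c)$ for every $\delta > 0$. Setting $t_\delta := g^*(c+\delta) - g^*(c) > 0$ I use the piecewise competitor
\begin{equation*}
g(x) = \begin{cases} g^*(x) + t_\delta, & x \le c,\\ g^*(c) + t_\delta, & c < x \le c+\delta,\\ g^*(x), & x > c+\delta, \end{cases}
\end{equation*}
which is non-decreasing precisely because the lift $t_\delta$ matches the gap on the right. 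A direct computation of $g^* - g$ (equal to $-t_\delta$ on $[a,c]$, bounded by $t_\delta$ in absolute value on $(c,c+\delta]$, and zero on $(c+\delta,b]$) reduces the characterization to
\[
t_\delta\,r_{g^*}(c) \le \int_c^{c+\delta} \Phi_{g^*}\bigl(g^*(x) - g^*(c) - t_\delta\bigr)\,d\mu \le t_\delta \int_c^{c+\delta} |\Phi_{g^*}|\,d\mu,
\]
so dividing by $t_\delta$ and letting $\delta \to 0^+$ (using that $\Phi_{g^*} \in L^1$, which follows from $f, g^* \in L^\Phi$ and $\Delta_2$) yields $r_{g^*}(c) \le 0$. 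A mirror construction --- leaving $g^*$ untouched on $[a, c-\delta)$, flattening it to the value $g^*(c-\delta)$ on $[c-\delta, c)$, and subtracting $g^*(c) - g^*(c-\delta)$ on $[c,b]$ --- handles the case in which $g^*$ is not constant on any left neighborhood of $c$. Ruling out both possibilities forces $g^*$ to be constant on a two-sided neighborhood of $c$ whenever $r_{g^*}(c) > 0$. The main technical obstacle is exactly this calibration of $t_\delta$ to the local variation of $g^*$: the perturbation must be large enough to activate the characterization but small enough that the error integral on $[c,c+\delta]$ is $o(t_\delta)$ as $\delta \to 0^+$.
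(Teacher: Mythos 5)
Your Parts 1--4 follow essentially the paper's own test-function argument (the paper uses $\tfrac12 g^*$ and $2g^*$ for Part 1, the step function $-\chi_{[a,c]}$ for Part 2, and the constants $\pm1$ for Part 3; your choices are equivalent). Where you genuinely diverge is in Parts 5 and 6, and your route is correct and considerably more elementary than the paper's. The paper proves these two items by integrating $r_{g^*}$ by parts against $dg^*$ in the Riemann--Stieltjes sense, splitting into cases according to whether $r_{g^*}$ vanishes somewhere in the interior, and invoking the auxiliary Lemma \ref{lem:min_at_endpoints} to produce subintervals on which $r_{g^*}$ attains its minimum at an endpoint. You bypass all of that: for Part 5 the jump at $c$ creates two-sided slack for the competitor $g^*-t\chi_{[a,c)}$, so the characterization yields $t\,r_{g^*}(c)\ge 0$ for $t$ of both signs near $0$ and hence $r_{g^*}(c)=0$; for Part 6 you calibrate the lift $t_\delta=g^*(c+\delta)-g^*(c)$ so that the error term on $(c,c+\delta]$ is bounded by $t_\delta\int_c^{c+\delta}|\Phi_{g^*}|\,d\mu=o(t_\delta)$ by absolute continuity of the integral of $\Phi_{g^*}\in L^1$ (which does follow, as you say, from $t\varphi(t)\le\Phi(2t)\le K_2\Phi(t)$ together with $f-g^*\in L^\Phi$ and the finiteness of $\mu$). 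What your approach buys is the complete elimination of the Stieltjes machinery and of Lemma \ref{lem:min_at_endpoints}; the paper's approach gains nothing essential here beyond reusing its integration-by-parts identity. Two small points: in Part 2 the competitor $g^*-t\chi_{[a,c]}$ is non-decreasing for \emph{every} $t>0$ simply because lowering a prefix of a non-decreasing function only enlarges the upward jump at $c$ --- no ``threshold'' condition is needed; and in Part 6, after obtaining constancy on a left and on a right neighborhood separately, you do need Part 5 (no jump at $c$ when $r_{g^*}(c)>0$) to identify the two constant values, which you correctly note at the outset.
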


\begin{proof}
\begin{itemize}
    \item[\bfseries 1.] From Theorem \ref{thm:lattice_characterization}, we have that $\int_a^b \Phi_{g^*} (g^*-g) d\mu \ge 0$ for all $g \in M_\Phi$. Choosing $g=2g^*$ gives $\int_a^b \Phi_{g^*} (-g^*) d\mu \ge 0$, which implies $\int_a^b \Phi_{g^*} g^* d\mu \le 0$. Choosing $g = \frac{1}{2}g^*$ gives $\int_a^b \Phi_{g^*} (\frac{1}{2}g^*) d\mu \ge 0$, which implies $\int_a^b \Phi_{g^*} g^* d\mu \ge 0$. The result follows.

    \item[\bfseries 2.] Fix $c \in [a,b]$. Define the test function $g(x):=-1$ for $a \le x \le c$ and $g(x):=0$ for $c < x \le b$. This function belongs to $M_\Phi$. From the characterization theorem and item 1, we have $0 = \int_a^b \Phi_{g^*}g^* d\mu \ge \int_a^b \Phi_{g^*}g d\mu$. The second integral evaluates to $\int_a^c \Phi_{g^*} (-1) d\mu + \int_c^b \Phi_{g^*} (0) d\mu = -\int_a^c \Phi_{g^*} d\mu = -r_{g^*}(c)$. Thus, $0 \ge -r_{g^*}(c)$, which implies $r_{g^*}(c) \ge 0$.

    \item[\bfseries 3.] Using the characterization $\int_a^b \Phi_{g^*}g^* d\mu \ge \int_a^b \Phi_{g^*}g d\mu$ and item 1, we choose the constant test functions $g=1$ and $g=-1$. For $g=1$, we get $0 \ge \int_a^b \Phi_{g^*} d\mu$. For $g=-1$, we get $0 \ge -\int_a^b \Phi_{g^*} d\mu$. Together, these imply $\int_a^b \Phi_{g^*} d\mu = 0$.

    \item[\bfseries 4.] This is a direct consequence of items 2 and 3, since $\int_c^b \Phi_{g^*} d\mu = \int_a^b \Phi_{g^*} d\mu - \int_a^c \Phi_{g^*} d\mu = 0 - r_{g^*}(c) \le 0$.

    \item[\bfseries 5.]
Let $\epsilon>0$ be such that the jump at $c$ satisfies $a+\epsilon<c<b-\epsilon$. Let $g$ be a non-decreasing function such that $g(x)=g^{*}(x)$ for all $x\in [a,a+\epsilon]\cup[b-\epsilon,b]$.

Applying the integration-by-parts formula (available in \cite{evansgariepy}), we have
$$r_{g^{*}}g^{*}|_{a+\epsilon}^{b-\epsilon}=\int_{a+\epsilon}^{b-\epsilon}r_{g^{*}}dg^{*}+\int_{a+\epsilon}^{b-\epsilon}\Phi_{g^{*}}g^{*}d\mu$$
$$r_{g^{*}}g|_{a+\epsilon}^{b-\epsilon}=\int_{a+\epsilon}^{b-\epsilon}r_{g^{*}}dg+\int_{a+\epsilon}^{b-\epsilon}\Phi_{g^{*}}gd\mu$$

By construction, $g$ and $g^*$ coincide at the endpoints $a+\epsilon$ and $b-\epsilon$. Furthermore, by Theorem \ref{thm:lattice_characterization}, we know that $\int_{a+\epsilon}^{b-\epsilon}\Phi_{g^{*}}g^{*}d\mu\geqslant\int_{a+\epsilon}^{b-\epsilon}\Phi_{g^{*}}gd\mu$. From these facts, it follows

\begin{equation}
    0\leqslant\int_{a+\epsilon}^{b-\epsilon}r_{g^{*}}dg^{*}\leqslant\int_{a+\epsilon}^{b-\epsilon}r_{g^{*}}dg.
\tag{II}\label{cota}\end{equation}

Now we consider first the case where there exists some $\overline{c} \in (a+\varepsilon, b-\varepsilon)$ such that $r_{g^{*}}(\overline{c})=0.$

Now we set the following non decreasing function  $$g_{1}(x):=\left\{ \begin{array}{ll}
g^{*}(x) & \text{if } a\leq x \leq a+\epsilon \\
g^{*}(a+\epsilon) & \text{if } a+\epsilon < x < \overline{c} \\
g^{*}(b-\epsilon) & \text{if } \overline{c} \leq x < b-\epsilon \\
g^{*}(x) & \text{if } b-\epsilon \leq x \leq b
\end{array}\right.$$
Then we obtain
$$\int_{a+\epsilon}^{b-\epsilon}r_{g^{*}}dg_1=r_{g^{*}}(\overline{c}) \, [g^{*}(b-\epsilon)-g^{*}(a+\epsilon)]=0.$$
Thus we have the following chain of inequalities
$$r_{g^{*}}(c)[g^{*}(c^{+})-g^{*}(c^{-})]\leqslant\int_{a+\epsilon}^{b-\epsilon}r_{g^{*}}dg^{*}\leqslant\int_{a+\epsilon}^{b-\epsilon}r_{g^{*}}dg_1=0.$$

From this and $\eqref{cota}$, we conclude that $r_{g^{*}}(c)=0$. This proves the assertion under the additional assumption that $r_{g^{*}}(\overline{c})=0$ for some $\overline{c}\in(a+\epsilon,b-\epsilon)$.

Now, we consider the case where $r_{g^{*}}(x)>0$ for all $x\in(a,b)$. In this scenario, we can apply Lemma \ref{lem:min_at_endpoints} considering  a sequence $\epsilon_{n} \to 0$ such that $r_{g^{*}}$ attains its minimum over $[a+\epsilon_{n},b-\epsilon_{n}]$ at either $a+\epsilon_{n}$ or $b-\epsilon_{n}$. Assume the minimum is achieved at $a+\epsilon_{n}$. We define the test function $g_2$ as such
$$g_{2}(x):=\left\{ \begin{array}{ll}
g^{*}(x) & \text{if } a\leq x \leq a+\epsilon_n \\
g^{*}(b-\epsilon_{n}) & \text{if } a+\epsilon_n < x < b-\epsilon_{n} \\
g^{*}(x) & \text{if } b-\epsilon_n \leq x \leq b
\end{array}\right.$$
As before, we can compute the integral of $r_{g^{*}}$ with respect to $g_{2}$, which yields the inequality chain
$$\int_{a+\epsilon_n}^{b-\epsilon_n}r_{g^{*}}dg_2=r_{g^{*}}(a+\epsilon_n)[g^{*}(b-\epsilon_n)-g^{*}(a+\epsilon_n)]\leqslant\int_{a+\epsilon_n}^{b-\epsilon_n}r_{g^{*}}dg^{*}$$
So it must be
$$ r_{g^{*}}(a+\epsilon_n)[g^{*}(b-\epsilon_n)-g^{*}(a+\epsilon_n)]=\int_{a+\epsilon_n}^{b-\epsilon_n}r_{g^{*}}dg^{*}$$

This can be rewritten as $$\int_{a+\epsilon_n}^{b-\epsilon_n}[r_{g^{*}}(\cdot)-r_{g^{*}}(a+\epsilon_n)]dg^{*}=0$$
If $g^{*}$ had a jump at $c\in(a+\epsilon_n,b-\epsilon_n)$ with $n$ large enough, then we can once again argue that $$[r_{g^{*}}(c)-r_{g^{*}}(a+\epsilon_n)][g^{*}(c^{+})-g^{*}(c^{-})]\leqslant\int_{a+\epsilon_n}^{b-\epsilon_n}[r_{g^{*}}(\cdot)-r_{g^{*}}(a+\epsilon_n)]dg^{*}=0$$
So we claim that $r_{g^{*}}(a+\epsilon_n)=r_{g^{*}}(c)$.

We take a decreasing subsequence $\epsilon_{n_k}$ such that $r_{g^{*}}$ always attains its minimum at the left end of $[a+\epsilon_{n_k},b-\epsilon_{n_k}]$, and consider the limit as $k\rightarrow\infty$. Using the continuity of $r_{g^{*}}$, we see that  $r_{g^{*}}(c)=r_{g^{*}}(a^{+})=0$.

A symmetric argument, assuming the minimum is attained at the right endpoint, shows that $r_{g^{*}}(c)=r_{g^{*}}(b^{-})=0$.

But this contradicts our assumption $r_{g^{*}}(x)>0$ for all $x\in(a,b)$, so in this case $g^{*}$ cannot jump at any $c\in(a,b)$.

Since $g^{*}$ is monotone, we have also proved that if $r_{g^{*}}(c)>0$, then $g^{*}$ is continuous at $c$.

\item[\bfseries 6.]

Let $\epsilon>0$ be such that $c$ satisfies $a+\epsilon<c<b-\epsilon$.

First, suppose that $r_{g^{*}}(\overline{c})=0$ for some $\overline{c}\in(a+\epsilon,b-\epsilon)$.
We define the test function $g_{3}$ as
$$ g_{3}(x):=\left\{ \begin{array}{ll} g^{*}(a+\epsilon) & \text{if } a+\epsilon\leq x \leq \overline{c} \\ g^{*}(b-\epsilon) & \text{if } \overline{c} < x \leq b-\epsilon \end{array}\right. $$
Since $r_{g^{*}}$ is continuous and $r_{g^{*}}(c)>0$, there exist $x_1,x_2$ such that $a+\epsilon \leqslant x_{1} < c < x_{2} \leqslant b-\epsilon$ and $\min_{x\in[x_1,x_2]}r_{g^{*}}(x)>0$.
Since $g^*$ is non-decreasing and $r_{g^*} \geqslant 0$, we can observe the following chain of inequalities:
$$ \min_{x\in[x_1,x_2]}r_{g^{*}}(x)[g^{*}(x_2)-g^{*}(x_1)] \leqslant \int_{x_1}^{x_2}r_{g^{*}}dg^{*} \leqslant \int_{a+\epsilon}^{b-\epsilon}r_{g^{*}}dg^{*} \leqslant \int_{a+\epsilon}^{b-\epsilon}r_{g^{*}}dg_3 = $$
$$ r_{g^{*}}(\overline{c})[g^{*}(b-\epsilon)-g^{*}(a+\epsilon)]=0. $$
From this, it must be that $g^{*}(x_2)=g^{*}(x_1)$. That is, $g^{*}$ is constant between $x_1$ and $x_2$, as was to be shown.

It remains to study the case where $r_{g^{*}}(x)>0$ for all $x\in(a,b)$.

Again, we apply Lemma \ref{lem:min_at_endpoints} and work with a sequence $\epsilon_{n}$ that tends to $0$ such that on $[a+\epsilon_{n},b-\epsilon_{n}]$, the function $r_{g^{*}}$ is non-constant and takes its minimum at $a+\epsilon_{n}$ or $b-\epsilon_{n}$.

As seen in the proof of the previous item, if $r_{g^{*}}$ takes its minimum at $a+\epsilon_{n}$, then

$$\int_{a+\epsilon_n}^{b-\epsilon_n}r_{g^{*}}dg^{*}=r_{g^{*}}(a+\epsilon_n)[g^{*}(b-\epsilon_n)-g^{*}(a+\epsilon_n)]$$

Analogously, if $r_{g^{*}}$ takes its minimum at $b-\epsilon_{n}$, it can be seen that

$$\int_{a+\epsilon_n}^{b-\epsilon_n}r_{g^{*}}dg^{*}=r_{g^{*}}(b-\epsilon_n)[g^{*}(b-\epsilon_n)-g^{*}(a+\epsilon_n)]$$

So in both cases it holds that

$$\int_{a+\epsilon_n}^{b-\epsilon_n}r_{g^{*}}dg^{*}=\min\{r_{g^{*}}(a+\epsilon_n),r_{g^{*}}(b-\epsilon_n)\}[g^{*}(b-\epsilon_n)-g^{*}(a+\epsilon_n)]$$

By taking the limit as $n\rightarrow\infty$, the left-side sequence of integrals converges to $\int_{a}^{b}r_{g^{*}}dg^{*}$ by Lebesgue's Dominated Convergence Theorem. 

On the right-hand side, since $g^*$ is non-decreasing on $[a,b]$, $[g^{*}(b-\epsilon_{n})-g^{*}(a+\epsilon_{n})]\leqslant[g^{*}(b^{-})-g^{*}(a^{+})]$. Also by the continuity of $r_{g^*}$ and the fact that $r_{g^*}(a)=r_{g^*}(b)=0$, we have:
$$\lim_{n\rightarrow\infty} \min\{r_{g^{*}}(a+\epsilon_n),r_{g^{*}}(b-\epsilon_n)\} = 0.$$

Since the right-hand side is the product of a sequence converging to zero and a bounded sequence, it yields:
$$\int_{a}^{b}r_{g^{*}}dg^{*}=0.$$

However, we assumed that $r_{g^{*}}(x)>0$ for all $x\in(a,b)$. Then $dg^{*}$ is identically zero on $(a,b)$. Thus $g^{*}$ must be constant (and therefore continuous) on $(a,b)$.
 
\end{itemize}
\end{proof}

\section{Continuity and Uniqueness Results}

We now show that the best non-decreasing approximant $g^*$ is continuous at every point in which $f$ is approximately continuous.

\begin{definition}[Approximate Continuity]
A point $x_0 \in (a,b)$ is a point of \textbf{approximate continuity} of a measurable function  $f$  if  for every $\delta > 0$, the set $A_{\delta} := \{x \in [a,b] : |f(x)- f(x_0)| < \delta \}$ has metric density 1 at $x_0$. That is, $\lim_{\mu(I)\to 0}\frac{\mu(A_{\delta}\cap I)}{\mu(I)}=1$ for any interval $I$ containing $x_0$.
\end{definition}

\begin{theorem}[Continuity of the Best Approximant]
\label{thm:continuity}
Let $x_0 \in (a,b)$ be a point of approximate continuity of $f$, and let one of the following conditions hold:
\begin{enumerate}[label=\alph*)]
    \item $\Phi$ is an $N$-function and $f\in L^\infty([a,b])$.
    \item $\Phi$ is an $N_{\infty}$-function and $f\in L^{\Phi}$.
\end{enumerate}
Then $x_0$ is a point of continuity of any best approximant $g^* \in \mu_\Phi(f/M_\Phi)$.
\end{theorem}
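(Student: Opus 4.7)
The plan is to proceed by contradiction. I assume some $g^* \in \mu_\Phi(f/M_\Phi)$ is discontinuous at $x_0$; since $g^*$ is non-decreasing, this must be a jump, so $\alpha := g^*(x_0^-) < g^*(x_0^+) =: \beta$. Item 5 of Lemma \ref{lem:swetits_analogue} gives $r_{g^*}(x_0) = 0$, while item 2 gives $r_{g^*}(c) \ge 0$ for every $c \in [a,b]$. Consequently, for every sufficiently small $\eta > 0$,
\begin{equation*}
\int_{x_0}^{x_0+\eta} \Phi_{g^*}\,d\mu = r_{g^*}(x_0+\eta) \ge 0, \qquad \int_{x_0-\eta}^{x_0} \Phi_{g^*}\,d\mu = -r_{g^*}(x_0-\eta) \le 0.
\end{equation*}
The strategy is to use $c_0 := f(x_0)$ together with approximate continuity to show that, on one side of $x_0$, the integrand $\Phi_{g^*}$ has a definite sign and is bounded away from zero on a set of almost full measure, forcing the corresponding one-sided integral to have the opposite strict sign.

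Set $\gamma := (\alpha+\beta)/2$ and $\rho := (\beta-\alpha)/4 > 0$. If $c_0 \le \gamma$, I work to the right of $x_0$. Monotonicity gives $g^*(x) \ge \beta$ on $(x_0, x_0+\eta)$, while approximate continuity (applied with $\delta = \rho$ and the one-sided density deduced from the symmetric interval $(x_0-\eta,x_0+\eta)$) guarantees that $G_\eta := \{x \in (x_0, x_0+\eta) : |f(x) - c_0| < \rho\}$ satisfies $\mu(G_\eta) = \eta - o(\eta)$ as $\eta \to 0^+$. On $G_\eta$, $f(x) \le c_0 + \rho \le \gamma + \rho < \beta \le g^*(x)$, so $\mathrm{sgn}(f - g^*) = -1$ and $|f - g^*| \ge \rho$, yielding $\Phi_{g^*} \le -\varphi(\rho)$ pointwise. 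On the complementary set $B_\eta := (x_0, x_0+\eta) \setminus G_\eta$, $|\Phi_{g^*}|$ is bounded by a constant $M$ independent of $\eta$: under hypothesis (a), $f \in L^\infty$ and the monotone $g^*$ is bounded on a fixed neighborhood of $x_0$, so $\varphi(|f-g^*|)$ is bounded; under (b), $\varphi$ is itself bounded. Combining,
\begin{equation*}
\int_{x_0}^{x_0+\eta} \Phi_{g^*}\,d\mu \;\le\; -\varphi(\rho)\,\mu(G_\eta) + M\,\mu(B_\eta) \;=\; -\varphi(\rho)\,\eta + o(\eta),
\end{equation*}
which is strictly negative for small $\eta$, contradicting the first inequality in the earlier display.

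The case $c_0 > \gamma$ is entirely symmetric on the left of $x_0$: there $g^*(x) \le \alpha$, while on a density-one subset $f(x) \ge c_0 - \rho \ge \gamma - \rho > \alpha$, so $\Phi_{g^*} \ge \varphi(\rho)$ there and the left-hand integral becomes strictly positive for small $\eta$, contradicting the second inequality. The two cases exhaust the possibilities (even when $c_0 \in \{\alpha,\beta\}$ exactly one of them applies; only the strict positivity of the jump $\beta-\alpha$ is ever used), so $g^*$ must be continuous at $x_0$.

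The principal obstacle, which is precisely what forces the dichotomy of hypotheses (a)/(b), is the uniform bound on $\varphi(|f-g^*|)$ over the bad set $B_\eta$: without it, one cannot absorb the contribution of $B_\eta$ into $o(\eta)$ and the whole comparison collapses. Case (a) supplies the bound through $\|f\|_\infty$ together with local boundedness of the monotone $g^*$; case (b) supplies it through the intrinsic boundedness of $\varphi$ inherent to the $N_\infty$-function setting.
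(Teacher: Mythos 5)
Your proposal is correct and follows essentially the same route as the paper's proof: assume a jump at $x_0$, note $r_{g^*}(x_0)=0$ and the sign constraints $r_{g^*}(x_0\pm\eta)\gtrless 0$ from Lemma \ref{lem:swetits_analogue}, and then use approximate continuity to split the one-sided integral of $\Phi_{g^*}$ into a density-one set where the integrand is bounded away from zero with a definite sign and a complement of measure $o(\eta)$ where it is bounded, forcing a sign contradiction. Your version is in fact slightly tidier on two points the paper glosses over — deducing the one-sided density from the symmetric-interval definition, and justifying the uniform bound in case (a) via the local boundedness of the monotone $g^*$ rather than a global one — but the underlying argument is the same.
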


\begin{proof}
By Lemma \ref{lem:swetits_analogue}, we only need to consider the case where $r_{g^*}(x_0)=0$, since if $r_{g^*}(x_0)>0$, $g^*$ is already known to be constant, and thus continuous, in a neighborhood of $x_0$.

We proceed by contradiction. Assume $f(x_0) < g^*(x_0^+)$. We can then choose a $\delta > 0$ small enough such that $f(x_0)+\delta < g^*(x_0^+)$. Let $A_\delta = \{x \in [a,b] : |f(x)-f(x_0)| < \delta\}$. For any $x \in A_\delta$ near $x_0$, we have $f(x) < f(x_0)+\delta < g^*(x_0^+)$. Since $g^*$ is non-decreasing, $g^*(x_0^+) \le g^*(x)$ for $x > x_0$, thus $f(x) < g^*(x)$ and $sgn(f-g^*)(x)=-1$.

It is clear from the definition that
\begin{align*}
    r_{g^{*}}(x_{0}+\epsilon) &= \int_{a}^{x_{0}}\varphi(|f-g^{*}|)sgn(f-g^{*})d\mu+\int_{x_{0}}^{x_{0}+
    \epsilon}\varphi(|f-g^{*}|)sgn(f-g^{*})d\mu \\
    &= r_{g^{*}}(x_{0})+\int_{x_{0}}^{x_{0}+
    \epsilon}\varphi(|f-g^{*}|)sgn(f-g^{*})d\mu \\
    &= \int_{x_{0}}^{x_{0}+
    \epsilon}\varphi(|f-g^{*}|)sgn(f-g^{*})d\mu \ge 0.
\end{align*}

We can also see that $\varphi(|f(x_0) + \delta - g^{*}(x_0^+)|)\leq \varphi(|f(x)-g^{*}(x)|).$

Since $x_0$ is a point of approximate continuity from the right for $f$, then
$$\lim\limits_{\mu(I^{+})\rightarrow 0}\frac{\mu(A_{\delta}\cap I^{+})}{\mu(I^{+})}=1$$
for $I^{+}=(x_{0},x_{0}+\epsilon).$

Therefore,
\begin{align*}
    \int_{A_{\delta}\cap (x_{0},x_{0}+\epsilon)}\varphi(|f-g^{*}|)sgn(f-g^{*})\frac{d\mu}{\epsilon} \leq \\ -\varphi(|f(x_0) + \delta - g^{*}(x_{0}^{+})|)\frac{\mu(A_{\delta}\cap (x_{0},x_{0}+\epsilon))}{\epsilon}<0.
\end{align*}

Both $a)$ and $b)$ allow us to deduce that $\varphi(|f-g^{*}|)$ is bounded by some $M>0$, so we have

$$ \left|\int_{A_{\delta}^{c}\cap (x_{0},x_{0}+\epsilon)}\varphi(|f-g^{*}|)sgn(f-g^{*})\frac{d\mu}{\epsilon}\right|\leqslant M \frac{\mu(A_{\delta}^{c}\cap (x_{0},x_{0}+\epsilon))}{\epsilon}$$
Leaning again on the fact that $x_0$ is a point of approximate continuity from the right for $f$, we can see
$$ \int_{A_{\delta}^{c}\cap (x_{0},x_{0}+\epsilon)}\varphi(|f-g^{*}|)sgn(f-g^{*})\frac{d\mu}{\epsilon} \to 0, $$ as $\varepsilon$ goes to $0.$

Then for a sufficiently small $\epsilon$,
$$r_{g^{*}}(x_{0}+\epsilon)=\int_{x_{0}}^{x_{0}+ \epsilon}\varphi(|f-g^{*}|)sgn(f-g^{*})d\mu < 0.$$

But it was previously shown to be non-negative. This contradiction arose from the assumption that $f(x_0)<g^{*}(x_{0}^{+})$. Therefore, $f(x_0)\geq g^{*}(x_{0}^{+})$.

Assuming that $f(x_0)>g^{*}(x_{0}^{-})$, using approximate continuity from the left and proceeding analogously, we arrive at $f(x_0)\leq g^{*}(x_{0}^{-})$. Hence $f(x_0)=g^{*}(x_{0}^{-})=g^{*}(x_{0}^{+})$ and $g^{*}$ is continuous at $x_0$.
\end{proof}

We now present the main uniqueness result.

\begin{theorem}
 Let one of the following conditions hold
\begin{enumerate}[label=\alph*)]
    \item $\Phi$ is an $N$-function and $f\in L^\infty([a,b])$.
    \item $\Phi$ is an $N_{\infty}$-function and $f\in L^{\Phi}([a,b])$.
\end{enumerate}
If $f$ is approximately continuous at every point in $(a,b)$, then $\mu_\Phi(f/M_\Phi)$ is a singleton.
\end{theorem}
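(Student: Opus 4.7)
The plan is to take two best approximants $g_1^*, g_2^* \in \mu_\Phi(f/M_\Phi)$ and deduce $g_1^* = g_2^*$. I first exploit the lattice structure of $M_\Phi$: from the pointwise identity $\Phi(|f - g_1^* \wedge g_2^*|) + \Phi(|f - g_1^* \vee g_2^*|) = \Phi(|f - g_1^*|) + \Phi(|f - g_2^*|)$ (obtained by permuting arguments on $\{g_1^* \le g_2^*\}$ and its complement), both $g_1^* \wedge g_2^*$ and $g_1^* \vee g_2^*$ are again in $\mu_\Phi(f/M_\Phi)$, so it suffices to treat the case $g_1^* \le g_2^*$. By Theorem \ref{thm:continuity} both are continuous on $(a,b)$, and from Theorem \ref{thm:continuity} together with Lemma \ref{lem:swetits_analogue}(6) I extract the following dichotomy at each $x \in (a,b)$: either $r_{g_i^*}(x) = 0$ and then $g_i^*(x) = f(x)$, or $r_{g_i^*}(x) > 0$ and then $g_i^*$ is constant on a neighborhood of $x$.

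Setting $g_3^* := \tfrac{1}{2}(g_1^* + g_2^*) \in M_\Phi$, convexity of the energy makes $g_3^*$ a best approximant as well. Suppose for contradiction that $S := \{g_1^* < g_2^*\}$ is nonempty and fix a connected component $(\alpha,\beta) \subseteq S$. On $(\alpha,\beta)$ the three functions satisfy $g_1^* < g_3^* < g_2^*$ strictly, so at each point at most one can equal $f(x)$; the dichotomy then forces at least two of $g_1^*, g_2^*, g_3^*$ to be locally constant at $x$, and since any two of them determine the third linearly, all three are constant on a common neighborhood of $x$. Continuous locally constant functions on a connected interval are constant, so $g_1^* \equiv c_1$ and $g_2^* \equiv c_2$ on $(\alpha,\beta)$ with $c_1 < c_2$. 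If $\alpha$ or $\beta$ lies in $(a,b)$, continuity at that endpoint (which is not in $S$, hence where $g_1^* = g_2^*$) forces $c_1 = c_2$, a contradiction; so necessarily $(\alpha,\beta) = (a,b)$ and $g_1^*, g_2^*$ are two distinct constants a.e.\ on $[a,b]$.

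The main obstacle is to exclude this last scenario using only approximate continuity of $f$. Applying Theorem \ref{thm:lattice_characterization} to each constant $c_i$ with the test functions $c_i \pm 1$ yields $\int_a^b \varphi(|f-c_i|)\, sgn(f - c_i)\,d\mu = 0$ for $i=1,2$; subtracting the two identities and using the monotonicity of $\varphi$ term-by-term forces $\mu(\{c_1 \le f \le c_2\}) = 0$. Next, for any $x_0 \in (a,b)$ with $f(x_0) \in (c_1,c_2)$, choosing $\delta < \min(f(x_0) - c_1, c_2 - f(x_0))$ makes $\{|f - f(x_0)| < \delta\} \subseteq \{c_1 < f < c_2\}$, a null set; this contradicts the density-$1$ requirement of approximate continuity at $x_0$, so no point of $(a,b)$ maps into $(c_1,c_2)$ under $f$. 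Invoking the Darboux (intermediate-value) property of approximately continuous functions (a classical result, see e.g.\ Bruckner) on the resulting pointwise partition $\{f \le c_1\} \cup \{f \ge c_2\} = (a,b)$ then leaves only $f \le c_1$ everywhere or $f \ge c_2$ everywhere; either possibility makes one of the two integrals above strictly signed (using $\varphi > 0$ on $(0,\infty)$ and $c_2 > c_1$), the contradiction sought.
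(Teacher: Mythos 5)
Your proof is correct, but it follows a genuinely different route from the paper's, and the two are worth contrasting. The shared core is the midpoint trick ($g_3^*=\tfrac12(g_1^*+g_2^*)$ is again a best approximant) together with the dichotomy ``$r_{g^*}(x)>0\Rightarrow g^*$ locally constant'' (Lemma~\ref{lem:swetits_analogue}, item 6) versus ``$r_{g^*}(x)=0\Rightarrow g^*(x)=f(x)$''; note, though, that this second branch is not the \emph{statement} of Theorem~\ref{thm:continuity} (which only asserts continuity of $g^*$) but a byproduct of its proof, so a fully self-contained writeup would need to repeat that density computation. From there you diverge: you first use the pointwise lattice identity to replace $g_1^*,g_2^*$ by $g_1^*\wedge g_2^*$ and $g_1^*\vee g_2^*$, reducing to comparable approximants; you then run a three-function pigeonhole on each connected component of the open set $\{g_1^*<g_2^*\}$ (at most one of the three distinct values can equal $f(x)$, so two are locally constant, hence all three by the linear relation), which forces both approximants to be global constants $c_1<c_2$; and you finally exclude that case using the identities $\int_a^b\varphi(|f-c_i|)\,sgn(f-c_i)\,d\mu=0$, the null-set argument for $\{c_1\le f\le c_2\}$, approximate continuity, and the Darboux property of everywhere approximately continuous functions. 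The paper instead shows $\mu(\{(f-g_1)(f-g_2)<0\})=0$, splits $(a,b)$ into $\Omega_1=\{f>g_3^*\}$, $\Omega_2=\{f<g_3^*\}$, $\Omega_3=\{f=g_3^*\}$, proves $g_1=g_2$ a.e.\ on $\Omega_3$ and local constancy near points of $\Omega_1\cup\Omega_2$, and concludes with a short topological argument. Your route costs one external classical ingredient (Denjoy's theorem that finite, everywhere approximately continuous functions are Darboux and Baire~1), but it buys an explicit endgame: the paper's closing sentence is rather quick given that $\Omega_1$ and $\Omega_2$ need not be open, whereas your component-by-component analysis of the open set $\{g_1^*<g_2^*\}$ leaves nothing implicit. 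All intermediate steps I checked (membership of $g_1^*\wedge g_2^*$ and $g_1^*\vee g_2^*$ in $M_\Phi$, the strict inequality $h_1>h_2$ on $\{c_1\le f\le c_2\}$ coming from $\varphi(t)>0$ for $t>0$, and the final strict sign of $\int_a^b\varphi(|f-c_2|)\,sgn(f-c_2)\,d\mu$ when $f\le c_1$) go through.
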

\begin{proof}

Let $g_1,g_2\in\mu_\Phi(f/M_\Phi)$. We shall name their average ${g^{*}} = \frac{(g_1+g_2)}{2}$. By the convexity of the function $\Phi$, we have:
$$ \int_a^b \Phi\left(\left|f-{g^{*}}\right|\right)d\mu \le \frac{1}{2}\int_a^b \Phi(|f-g_1|)d\mu + \frac{1}{2}\int_a^b \Phi(|f-g_2|)d\mu= \inf_{g \in C}\int_{a}^{b}\Phi(|f-g|)d\mu. $$
This implies that $g^{*}\in\mu_\Phi(f/M_\Phi)$ and the inequality above must be an equality.

Let us consider $S=\{x\in[a,b]: [(f-g_1)(f-g_2)](x)<0\}$. If we take $x\in S$, $|f-{g^{*}}|(x)<\frac{1}{2}|f-g_1|(x)+\frac{1}{2}|f-g_2|(x).$
Let us assume now that $\mu(S)>0$, then $\int_S \Phi\left(\left|f-{g^{*}}\right|\right)d\mu < \frac{1}{2}\int_a^b \Phi(|f-g_1|)d\mu + \frac{1}{2}\int_a^b \Phi(|f-g_2|)d\mu.$ This implies $\int_a^b \Phi\left(\left|f-{g^{*}}\right|\right)d\mu < \frac{1}{2}\int_a^b \Phi(|f-g_1|)d\mu + \frac{1}{2}\int_a^b \Phi(|f-g_2|)d\mu$, but this is a contradiction because $g_1,g_2\in\mu_\Phi(f/M_\Phi)$. It must be that $\mu(S)=0$.

Let us define the sets $\Omega_1 = \{x\in[a,b] : f(x) > {g^{*}}(x)\}$, $\Omega_2 = \{x \in[a,b] : f(x) < {g^{*}}(x)\}$, and $\Omega_3 = \{x\in[a,b]  : f(x) = {g^{*}}(x)\}$.

In $\Omega_3$, we have $2f=g_1+g_2$. That is $f-g_1=-(f-g_2)$, but we also know that $f-g_1$ and $f-g_2$ have the same sign almost everywhere. Therefore, $g_1=g_2$ almost everywhere in $\Omega_3$.

Now, let us assume that $x_0 \in \Omega_1$. We claim that $r_{g^{*}}(x_0) > 0$. Suppose, for contradiction, that $r_{g^{*}}(x_0)=0$.

Since $f(x_0)>{g^{*}}(x_0)$, we can take some $\delta>0$ small enough so that $f(x_0)>{g^{*}}(x_0)+\delta$. Since ${g^{*}}$ is continuous at $x_0$ by Theorem \ref{thm:continuity}, there is some $\epsilon>0$ such that ${g^{*}}(x_0)+\frac{\delta}{2}>{g^{*}}(x)$ for every $x\in(x_0 - \epsilon, x_0 + \epsilon)$. On the other hand, the approximate continuity of $f$ at $x_0$ yields that $A_{\frac{\delta}{2}}= \{x \in [a,b] : |f(x)-f(x_0)| < \frac{\delta}{2}\}$ has metric density $1$. Therefore, for $x\in A_{\frac{\delta}{2}}\cap (x_0 - \epsilon, x_0 + \epsilon)$, we can assert that $$f(x)>f(x_0)-\frac{\delta}{2}>{g^{*}}(x_0)+\frac{\delta}{2}>{g^{*}}(x).$$
In particular, $sgn(f-g^{*})(x)=1$ and $\varphi(|f(x_0)-\delta-g^{*}(x_0)|)<\varphi(|f(x)-g^{*}(x)|)$. Then,$$
    \int_{A_{\frac{\delta}{2}}\cap (x_{0}-\epsilon, x_{0})}\varphi(|f-g^{*}|)sgn(f-g^{*})\frac{d\mu}{\epsilon} \geq \varphi(|f(x_0) - \delta - g^{*}(x_{0})|)\frac{\mu(A_{\frac{\delta}{2}}\cap (x_{0}-\epsilon,x_{0}))}{\epsilon}>0.
$$
On the other hand, as we saw before $\int_{A_{\frac{\delta}{2}}^{c}\cap (x_{0}-\epsilon, x_{0})}\varphi(|f-g^{*}|)sgn(f-g^{*})\frac{d\mu}{\epsilon}\rightarrow 0$ as $\epsilon\rightarrow 0$ because $\varphi(|f-g^{*}|)$ is bounded.

So we have $\int_{x_{0}-\epsilon}^{x_{0}
    }\varphi(|f-g^{*}|)sgn(f-g^{*})d\mu >0$ for $\epsilon >0$ small enough. But since we assumed $r_{g^{*}}(x_0)=0$ this implies that
$ r_{g^{*}}(x_{0}-\epsilon)=\int_{a}^{x_{0}-\epsilon}\varphi(|f-g^{*}|)sgn(f-g^{*})d\mu<0$. This contradicts Lemma \ref{lem:swetits_analogue}. Therefore, $r_{g^{*}}(x_0)>0$ for all $x_0 \in \Omega_1$.

Again by Lemma \ref{lem:swetits_analogue}, if $r_{g^{*}}(x_0)>0$, then $g^{*}$ must be constant in a neighborhood of $x_0$. This implies that $g_1$ and $g_2$ are also constant in a neighborhood of any point in $\Omega_1$. A similar argument shows that $g_1$ and $g_2$ are also constant in a neighborhood of any point in $\Omega_2$. Since $g_1$ and $g_2$ are continuous on $(a,b)$ and are equal on the set $\Omega_3$, their local constancy on the open sets $\Omega_1$ and $\Omega_2$ forces them to be equal everywhere on $(a,b)$.

\end{proof}

On a final note, there is an interesting relationship between the modular best approximation problem we first described and the known Luxemburg norm, which is defined as:
$$ \|f\|_{\Phi}=\inf\left\{\lambda>0:\int_{a}^{b}\Phi\left(\frac{|f(x)|}{\lambda}\right)d\mu\leqslant1\right\}.$$
We denote by $\mu_{ \|\cdot \|_{\Phi}}(f/M_{\Phi})$ the set of best non-decreasing approximants in $M_{\Phi}$ for a given function $f\in L^{\Phi}$ with respect to the Luxemburg norm. Landers and Rogge proved in \cite{landersrogge} that for an $N$-function $\Phi$ and $f\in L^{\Phi}$, it holds that
$\mu_{ \|\cdot \|_{\Phi}}(f/M_{\Phi})=\delta \mu_{\Phi}(\frac{f}{\delta}/\frac{M_{\Phi}}{\delta})$
where $\delta:= \inf_{h\in M_{\Phi}}\|f-h\|_{\Phi}$.

\begin{corollary}
 Let one of the following conditions hold
\begin{enumerate}[label=\alph*)]
    \item $\Phi$ is an $N$-function and $f\in L^\infty([a,b])$.
    \item $\Phi$ is an $N_{\infty}$-function and $f\in L^{\Phi}([a,b])$.
\end{enumerate}
 If $x_0$ is a point of approximate continuity of $f\not\in M_{\Phi}$, then $x_0$ is a point of continuity of any $h^{*}\in\mu_{ \|\cdot \|_{\Phi}}(f/M_{\Phi})$.
\end{corollary}

\begin{proof}
The result is immediate from the continuity at $x_0$ of any $g^{*}\in\mu_{\Phi}(f/M_\Phi)$:
$$\mu_{ \|\cdot \|_{\Phi}}(f/M_{\Phi})=\delta \mu_{\Phi}(\frac{f}{\delta}/\frac{M_{\Phi}}{\delta})$$
where $\delta:= \inf_{h\in M_{\Phi}}\|f-h\|_{\Phi}$ is positive since we assumed $f\notin M_{\Phi}$.
\end{proof}

\begin{remark}
It is important to clarify the relationship between our results and the classical case of $L^1([0,1])$ approximation studied by Smith and Swetits \cite{smithswetits}. While their work is the main inspiration for our approach, the space $L^1([0,1])$ is not, strictly speaking, an Orlicz space generated by an $N_\infty$-function as defined in Section 2. The function $\Phi(t) = |t|$ has an associated derivative $\varphi(t)=1$ for $t>0$, which does not satisfy the condition $\varphi(0^{+})=0$. Consequently, the theorems presented in this paper should not be seen as direct extensions of the results for $L^1([0,1])$.
\end{remark}


\clearpage
    \noindent
    \begin{small}
        \textsc{1 Instituto de Matemática Aplicada San Luis, UNSL-CONICET and Departamento de Matemática, FCFMyN, UNSL, Av. Ejército de los Andes 950, 5700 San Luis, Argentina.} \\
        \textit{Email address}: \texttt{abenaven@unsl.edu.ar}

        \vspace{0.5cm}

        \textsc{2 Instituto de Investigaciones Matemáticas, UBA-CONICET and Departamento de Matemática, FCEyN, Universidad de Buenos Aires, Intendente Güiraldes 2160, 1428 Buenos Aires, Argentina.} \\
        \textit{Email address}: \texttt{costaponcejuan@gmail.com}

        \vspace{0.5cm}

        \textsc{3 Instituto de Matemática Aplicada San Luis, UNSL-CONICET and Departamento de Matemática, FCFMyN, UNSL, Av. Ejército de los Andes 950, 5700 San Luis, Argentina.} \\
        \textit{Email address}: \texttt{sfavier@unsl.edu.ar}
    \end{small}

\end{document}